\newtheorem{definition}{Definition}
\newtheorem{theorem}{Theorem}
\newtheorem{proposition}{Proposition}
\let\NAT@parse\undefined
\newtheorem{example}{Example}
\newcommand{\sgn}{\mathrm{sgn}}
\title{\LARGE \bf
Inverse Optimal Feedback and Gain Margins for Unicycle Stabilization
}
\author{Kwang Hak Kim, Velimir Todorovski, and Miroslav Krstić
\thanks{This work was supported in part by the Office of Naval Research under Grant No. N00014-23-1-2376, in part by the Air Force Office of Scientific Research under Grant No. FA9550-23-1-0535, and in
part by the National Science Foundation under Grant No. ECCS-2151525. The results and opinions in this paper are solely of the authors and do not reflect the position or the policy of the U.S. Government or the National Science
Foundation.}
\thanks{K. Kim, V. Todorovski, and M. Krstić are with the Department of Mechanical and Aerospace Engineering, UC San Diego, 9500 Gilman Drive, La Jolla, CA, 92093-0411, {\tt\small \{kwk001,vtodorovski,krstic\}@ucsd.edu}}%
}
\begin{document}

\maketitle
\thispagestyle{empty}
\pagestyle{empty}


\begin{abstract}
The recent development of globally strict control Lyapunov functions (CLFs) for the challenging unicycle parking problem provides a foundation for pursuing optimality. We address this in the inverse optimal framework, thereby avoiding the need to solve the Hamilton–Jacobi–Bellman (HJB) equations, and establish a general result that is optimal with respect to a meaningful cost. We present several design examples that impose varying levels of penalty on the control effort, including arbitrarily bounded control. Furthermore, we show that the inverse optimal controller possesses an infinite gain margin thanks to the system being driftless, and leveraging this property, we extend the design to an adaptive controller that handles model uncertainty. Finally, we compare the performance of the non-adaptive inverse optimal controller with its adaptive counterpart.
\end{abstract}


\section{Introduction}

The unicycle model serves as a fundamental abstraction for a wide range of robotic and vehicle systems for multitudes of applications \cite{de2002control,campion1996structural}. Its simplicity makes it a crucial benchmark for control design, yet it also captures the essential nonholonomic constraints present in many real-world applications. Despite the unicycle being globally controllable, asymptotic stabilization to a desired configuration (parking) remains difficult due to Brockett’s necessary condition~\cite{brockett1983asymptotic} formally ruling out the existence of any smooth, time-invariant feedback laws.

To overcome this obstruction, previous works have approached with methods such as time-varying feedback \cite{pomet1992explicit,coron1993smooth}, discontinuous feedback \cite{de2000stabilization,bloch1996stabilization_slidingmode}, and even hybrid strategies \cite{hespanha1999_hybrid_stabilization,prieur2003robust}. Another commonly used strategy is the polar coordinate transformation, which circumvents Brockett’s condition by introducing a singularity into the system and has inspired numerous works to develop stabilizing control laws \cite{badreddin1993fuzzy,aicardi1995,astolfi1999exponential}. The polar transformation not only enables stabilization but also supports the construction of strict CLFs, which certify stability with stronger guarantees and have since been discovered in more recent works \cite{todorovski2025_CLF,wang24_force_controlled_safestable,restrepo2020leader}. The availability of strict CLFs has further-reaching implications: the pursuit of optimal controllers. 

In classical optimal control, not only is solving the Hamilton-Jacobi-Bellman (HJB) equation hardly ever possible, but the inescapable `curse of dimensionality' is to an equal degree about storing the results as it is about solving the HJB PDE. For instance, on devices with less than a terabyte of storage, there is not enough space to store the HJB solution on a uniformly quantized grid for a robotic manipulator with more than 3DOF. This motivation led Kalman \cite{kalman1964inverse_opt} to formulate the inverse optimal control problem for linear systems, where one first constructs a stabilizing controller in the form of a gradient of a CLF and then increases its gain by at least twice to obtain a controller that minimizes a meaningful cost functional.

Originally introduced for nonlinear systems in \cite{moylan1973nonlinear}, the nonlinear inverse optimal approach has since been extended in various directions, including to stochastic systems \cite{deng1997stochastic} and to differential games with bounded disturbances \cite{freeman2008robust}, later generalized to arbitrary disturbances through input-to-state stabilizing controllers \cite{krstic1998inverse}. The first constructive exploration of nonlinear inverse optimal control appears in \cite{sepulchre2012constructive}, for feedback laws of the form $-(L_gV)^{\rm T} = - g^{\rm T}\nabla V $, where $g$ is the input vector field of a system affine in control. This approach has found success in applications such as attitude control of rigid spacecraft \cite{krstic1999inverse,bharadwaj1998geometry}, yet remains considerably underdeveloped for the unicycle parking problem. Notably, the work of \cite{do2015global} addresses the stabilization of stochastic nonholonomic systems within the inverse optimal framework; however, the inverse optimality is established only at the subsystem level and without strict CLFs thanks to the inherent properties of stochastic systems. 

In this paper, we exploit the existence of globally strict CLFs to develop general inverse optimal controllers. The resulting feedback achieves even larger gain margins compared to conventional optimal controllers because of the absence of drift in the system. We present several design choice examples that impose different levels of penalty on the control effort. Finally, we extend the framework to adaptive control, demonstrate its ability to handle model uncertainty, and compare the performance with non-adaptive inverse optimal controllers.

\section{$L_gV$ (gradient) controllers}

Consider the polar coordinate representations of the unicycle model given as
\begin{align}\label{eq:unicycle_polar}
\begin{bmatrix}
\dot\rho\\ \dot\delta \\ \dot\gamma
\end{bmatrix} &=
\underbrace{\begin{bmatrix}
-\cos\gamma \\ \frac{\sin\gamma}{\rho} \\ \frac{\sin\gamma}{\rho}
\end{bmatrix}}_{g_1}v + \underbrace{\begin{bmatrix}
0\\0\\-1
\end{bmatrix}}_{g_2}\omega\,,
\end{align}
where $\rho > 0$ is the distance to the origin, $\delta,\gamma \in \mathbb{R}$ are the polar and line-of-sight angles, respectively, and $v$ and $\omega$ represent the forward velocity and steering control input, respectively. The transformations of the angles $\delta$ and $\gamma$ from Cartesian coordinates are defined as $\delta = \mbox{mod}(\text{{\rm atan2}}(y ,x ),2\pi) - \pi$ and $\gamma = \mbox{mod}(\text{{\rm atan2}}(y ,x )-\theta, 2\pi) - \pi$.

Inverse optimal control relies on controllers of the form $-L_gV^\top$ derived from strict CLFs $V$. In that regard, we provide one such CLF in the following proposition. However, we emphasize that any other strict CLFs may be used.

\begin{proposition}[\!\cite{todorovski2025_CLF},{\cite[Sec.~III.A]{restrepo2020leader}}\,] For the control system \eqref{eq:unicycle_polar} the positive definite, radially unbounded function
\begin{align}\label{eq:globa_CLF}
V(\rho,\delta,\gamma) = \rho^2 + \delta^2 + \left(\gamma + \frac{1}{2}\arctan(2 \delta)  \right)^2 
\end{align}
is a strict CLF on $\{\rho > 0\}\times \mathbb{R}^2$.
\end{proposition}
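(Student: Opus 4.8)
The plan is to verify directly the three defining properties of a strict CLF: positive definiteness, radial unboundedness, and the existence of a control that makes $\dot V$ strictly negative throughout the domain. Because the system \eqref{eq:unicycle_polar} is driftless, we have $\dot V = (L_{g_1}V)\,v + (L_{g_2}V)\,\omega$, so the decisive requirement is that the pair of Lie derivatives $(L_{g_1}V, L_{g_2}V)$ never vanishes simultaneously on $\{\rho>0\}\times\mathbb{R}^2$. Whenever that holds, the choice $v=-L_{g_1}V$, $\omega=-L_{g_2}V$ gives $\dot V=-(L_{g_1}V)^2-(L_{g_2}V)^2<0$, which is precisely the strict decrease we need. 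Positive definiteness and radial unboundedness I would dispatch first and quickly: $V$ is a sum of three squares, and since $\arctan$ is bounded, the $\gamma$-term dominates for large $|\gamma|$, so $V\to\infty$ as $\|(\rho,\delta,\gamma)\|\to\infty$ within the domain.

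The core of the argument is the gradient computation and the resulting Lie derivatives. Writing $\phi := \gamma + \tfrac12\arctan(2\delta)$, I would compute $\partial_\rho V = 2\rho$, $\partial_\gamma V = 2\phi$, and $\partial_\delta V = 2\delta + \tfrac{2\phi}{1+4\delta^2}$, which yields
\begin{align}
L_{g_2}V &= -2\phi, \\
L_{g_1}V &= -2\rho\cos\gamma + \left(2\delta + \tfrac{2\phi}{1+4\delta^2} + 2\phi\right)\tfrac{\sin\gamma}{\rho}.
\end{align}
Away from the set $\{L_{g_2}V=0\}$ there is nothing to prove, so the crux is to show that when the steering channel loses authority, i.e. $\phi=0$, the forward channel retains it. On $\{\phi=0\}$ the cross term collapses and $L_{g_1}V = -2\rho\cos\gamma + \tfrac{2\delta\sin\gamma}{\rho}$.

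The sign analysis is where the design of the shift $\tfrac12\arctan(2\delta)$ pays off. Since $\phi=0$ forces $\gamma=-\tfrac12\arctan(2\delta)\in(-\pi/4,\pi/4)$, we have $\cos\gamma>0$, so the first term is strictly negative for $\rho>0$. Moreover, because $\arctan$ is odd and increasing, $\sgn(\gamma)=-\sgn(\delta)$, hence $\sgn(\sin\gamma)=-\sgn(\delta)$ on this range and the potentially harmful cross term satisfies $\delta\sin\gamma\le 0$. Therefore $L_{g_1}V<0$ on $\{L_{g_2}V=0\}$, and the two Lie derivatives never vanish together. I expect the main obstacle to be exactly this coupled sign argument: one must confirm that the bounded shift confines $\gamma$ to $(-\pi/4,\pi/4)$ so that $\cos\gamma$ cannot change sign, and that the signs of $\delta$ and $\sin\gamma$ are opposed so the cross term never defeats the dominant $-2\rho\cos\gamma$. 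Finally, I would note that the target configuration $(\rho,\delta,\gamma)=(0,0,0)$ lies on the excluded boundary $\{\rho=0\}$, so $\dot V<0$ holds at every point of the open domain without exception, completing the strict-CLF verification.
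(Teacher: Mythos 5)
Your proof is correct: the gradient computation, the reduction to showing $(L_{g_1}V,L_{g_2}V)\neq(0,0)$ on $\{\rho>0\}\times\mathbb{R}^2$ (valid since the system is driftless), and the sign argument on the set $\{\gamma+\tfrac12\arctan(2\delta)=0\}$ --- where $|\gamma|<\pi/4$ forces $\cos\gamma>0$ and oddness of $\arctan$ forces $\delta\sin\gamma\le 0$ --- all check out. Note that the paper itself omits its proof (deferring to an extended journal version), but your argument is exactly the completion of the strategy the paper sketches in Section II, namely that for a driftless affine system a positive definite, radially unbounded $V$ is a (strict) CLF precisely when $L_gV$ vanishes only at the target, verified here via the expressions \eqref{eq:LgV_GloBa-both}.
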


\begin{proof}
Proofs are omitted in this paper for brevity and will appear in an extended journal version.
\end{proof}

Nevertheless, directly applying the $L_gV$ form to \eqref{eq:unicycle_polar} has a singularity at $\rho=0$ through $g_1(\rho,\gamma)$. To remove this, we redefine the velocity input by replacing $v$ with $v/\rho$, forcing $v$ to vanish as $\rho\to 0$. This adjustment is physically intuitive: velocity should decrease in proportion to distance from the target. With the input pair $(v,\omega)$ replaced by $(v/\rho,\omega)$, the new input vector field becomes $\bar g = [\rho g_1, g_2]$, so that $L_{\bar g}V$ is ``desingularized.''

Another key property that the inverse optimal design relies on is that $L_{\bar g}V(\Xi) = 0$ only at $\Xi=0$, where $\Xi := (\rho,\gamma,\delta)$, implying $|L_{\bar g}V(\Xi)|$ is positive definite in $\Xi$. This follows from a general fact: for affine systems $\dot x = f(x) + g(x)u$, a radially unbounded differentiable $V(x)$ is a CLF if and only if $L_gV(x)=0$ for $x\neq 0$ implies $L_fV(x)<0$. When $f(x)\equiv 0$, this condition reduces to $L_gV(x)\neq 0$ for all $x\neq 0$. For our case, the resulting expressions for $-L_{\bar g_1}V$ and $-L_{g_2}V$ obtained from \eqref{eq:unicycle_polar} and \eqref{eq:globa_CLF} are given as

\begin{subequations}
\label{eq:LgV_GloBa-both}
\begin{align}
-L_{\bar{g}_1}V/2 &= \rho^2\cos\gamma  - \sin\gamma\left[\delta + \left(1+\frac{1}{1+4\delta^2}\right)z\right] \label{eq:Lg1V_GloBa}\\
-L_{g_2}V/2 &= z
\,,
\label{eq:Lg2V_GloBa}
\end{align}
\end{subequations}
where $z = \gamma + \frac{1}{2} \arctan(2k_2 \delta)$. While the strict CLF \eqref{eq:globa_CLF} establishes that $|L_{\bar g} V(\Xi)|$ is positive definite in $\Xi$, it is not radially unbounded, a property obtained through the CLF modification described in the proof of Theorem 3.2 in \cite{krstic1998inverse}.

\section{Inverse Optimal Stabilization}

\subsection{Basic quadratic inverse optimality}\label{sec-basic-quadratic}

Before presenting the general inverse optimality methodology in Section \ref{sec-gen-inv-opt}, we give a particular example of what we are after. For \eqref{eq:globa_CLF}, all controllers of the form 
\begin{subequations}\label{eq-u*-quad0}  
\begin{align}
\label{eq-LQ-forward}
v^* &= -\rho\varepsilon_1^2 L_{\bar{g}_1}V\\
\label{eq-LQ-steer}
\omega^* &= -\varepsilon_2^2L_{g_2}V 
\,,
\end{align}
\end{subequations}
for all $\varepsilon_1, \varepsilon_2>0$, are not only globally asymptotically stabilizing but are the minimizers of the parametrized costs
\begin{align}
J = \int_0^\infty &\Biggl[l(\rho,\delta,\gamma) +\left(\dfrac{v}{\varepsilon_1\rho}\right)^2 +\left(\dfrac{\omega}{\varepsilon_2}\right)^2  \Biggr] dt\,,
\end{align}
where $l(\rho,\delta,\gamma) = \left(\varepsilon_1 L_{\bar{g}_1}V\right)^2 +\left(\varepsilon_2L_{g_2}V\right)^2$. Such costs are meaningful in the sense of imposing (i) a zero penalty when all the state variables $\rho,\delta,\gamma$ and both of the ($\rho$-weighted) control inputs $v/\rho$ and $\omega$ are zero, and (ii) a positive penalty when any of the states and either of the (weighted) controls are nonzero. Note that when $\rho = 0$, the forward velocity input is subjected to an infinite penalty, which is desirable, as it discourages  movement away from the positional origin once the origin is reached.

\subsection{General inverse optimal designs}\label{sec-gen-inv-opt}

To generalize the inverse optimality result from Section \ref{sec-basic-quadratic}, we first recall the \textit{Legendre–Fenchel transform} and present our main result.

\begin{definition}(Legendre-Fenchel transform)\label{def:legendre_Fenchel}. Let $\eta$ be a class $\mathcal{K}_\infty[0,a)$ function whose derivative $\eta'$ is also a class $\mathcal{K}_\infty[0,a)$ function where $a > 0$ is finite or infinite.
The mapping 
\begin{align}
\ell \eta(r) = \int_0^r (\eta')^{-1}(s) ds
\end{align}
represents the Legendre-Fenchel transform, where $(\eta')^{-1}(r)$ stands for the inverse function of $d\eta(r)/dr$.
\end{definition} 

\begin{figure*}[t]
\centering
\begin{subfigure}[t]{0.24\textwidth}
\centering
\includegraphics[width=\textwidth]{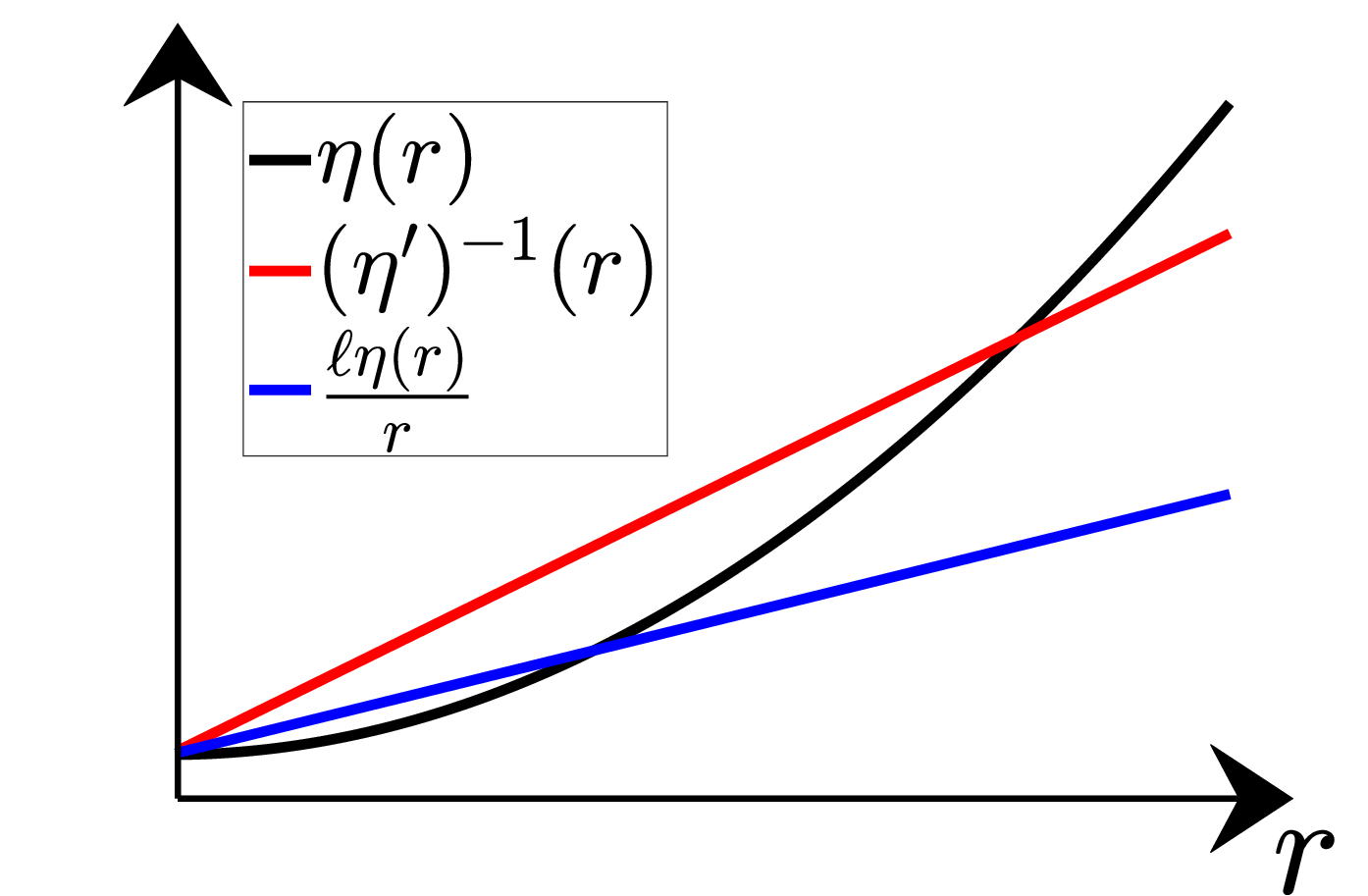}
\caption{$\eta(r) = r^2 / 2$ in (black), $(\eta^{\prime})^{-1}(r) = r$ in (red) and $\frac{\ell \eta (r)}{r} = r/2$ in (blue).}
\label{fig:eta_1}
\end{subfigure}
\begin{subfigure}[t]{0.24\textwidth}
\centering
\includegraphics[width=\textwidth]{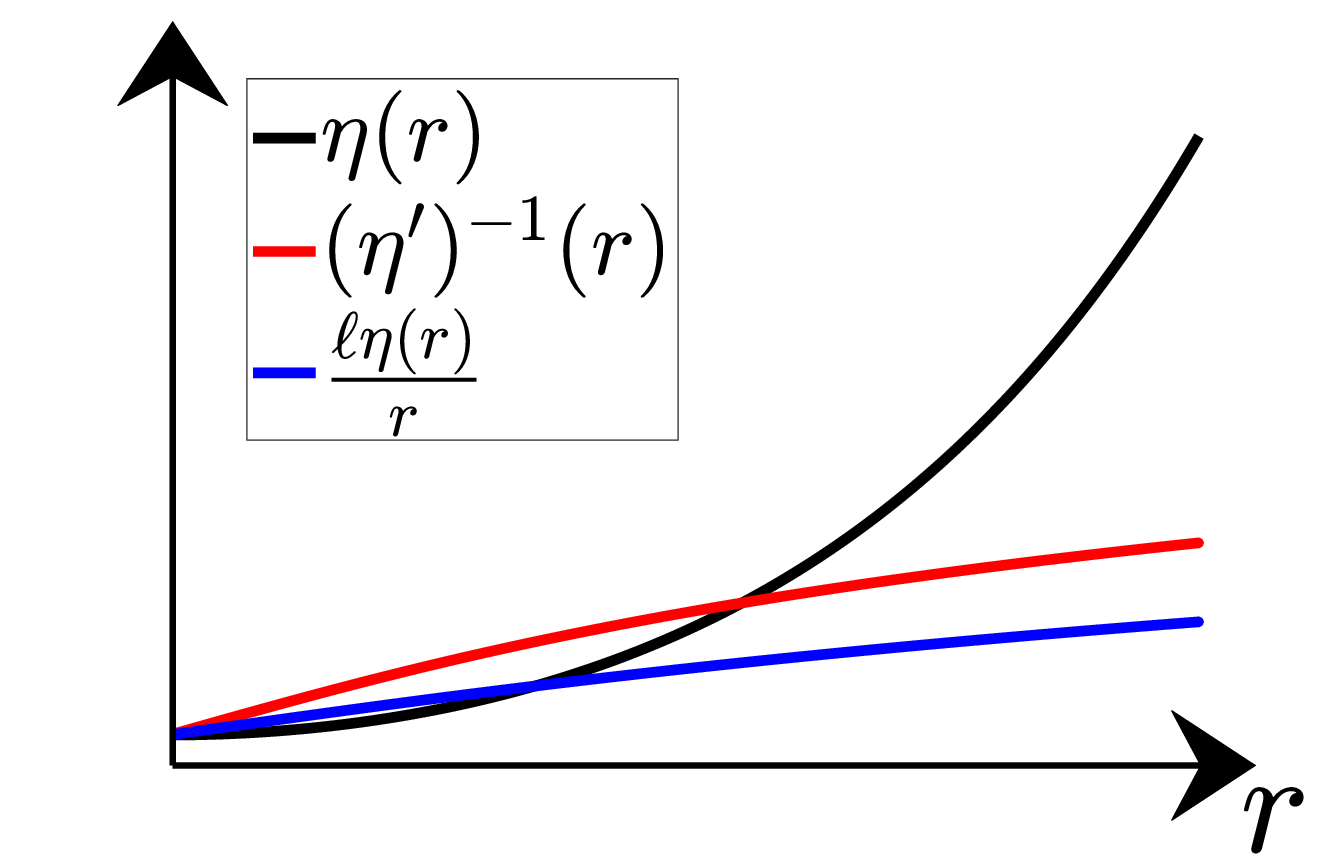}
\caption{$\eta(r) = \cosh(r) - 1$ in (black), $(\eta^{\prime})^{-1}(r) = {\rm arcsinh}(r)$ in (red) and $\frac{\ell \eta (r)}{r} = {\rm arcsinh}(r) - r/(1+\sqrt{r^2+1})$ in (blue).}
\label{fig:eta_2}
\end{subfigure}
\begin{subfigure}[t]{0.24\textwidth}
\centering
\includegraphics[width=\textwidth]{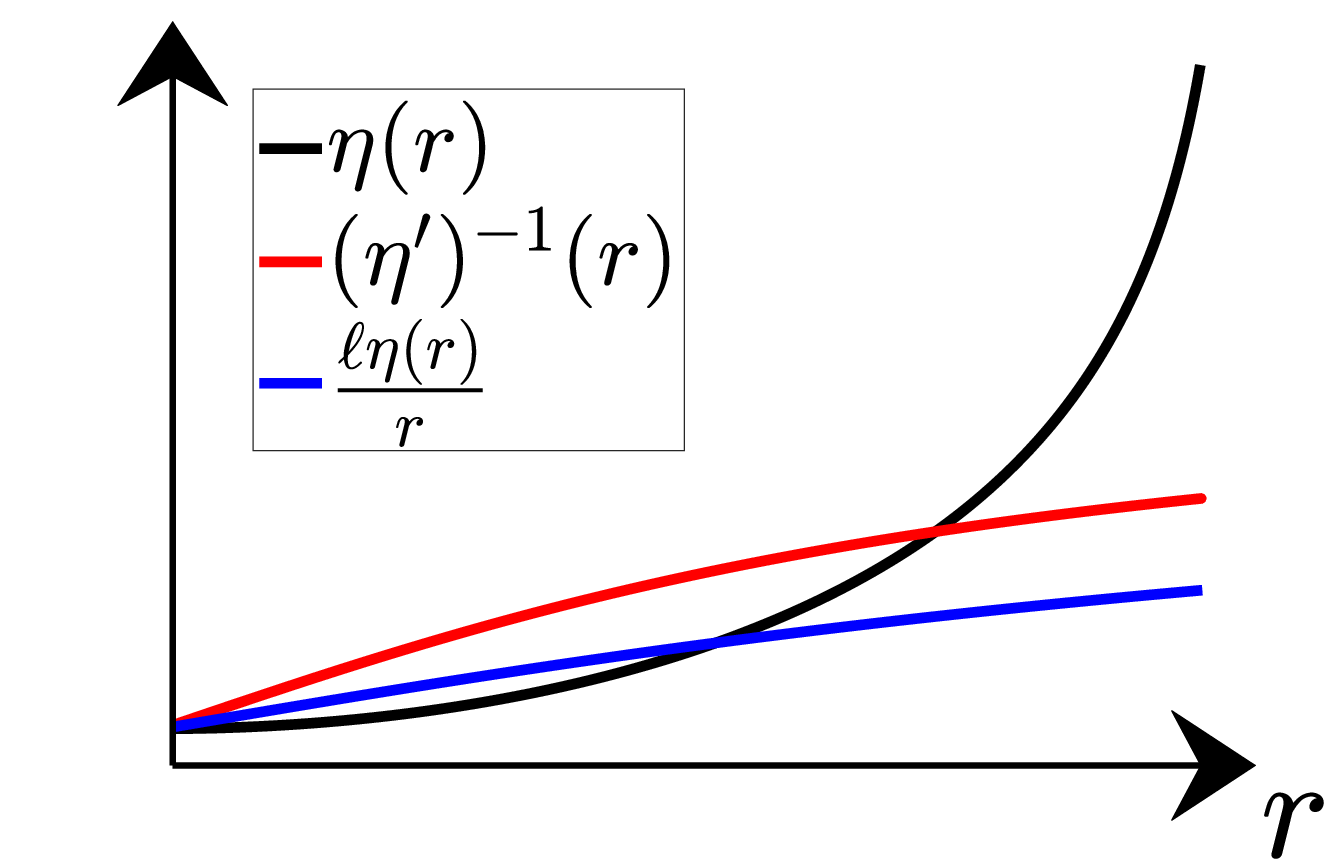}
\caption{$\eta(r) = -\ln(\cos(r))$ in (black), $(\eta^{\prime})^{-1}(r) = \arctan(r)$ in (red) and $\frac{\ell \eta (r)}{r} = \arctan(r) - \ln(1+r^2)/2r$ in (blue).}
\label{fig:eta_3}
\end{subfigure}
\begin{subfigure}[t]{0.24\textwidth}
\centering
\includegraphics[width=\textwidth]{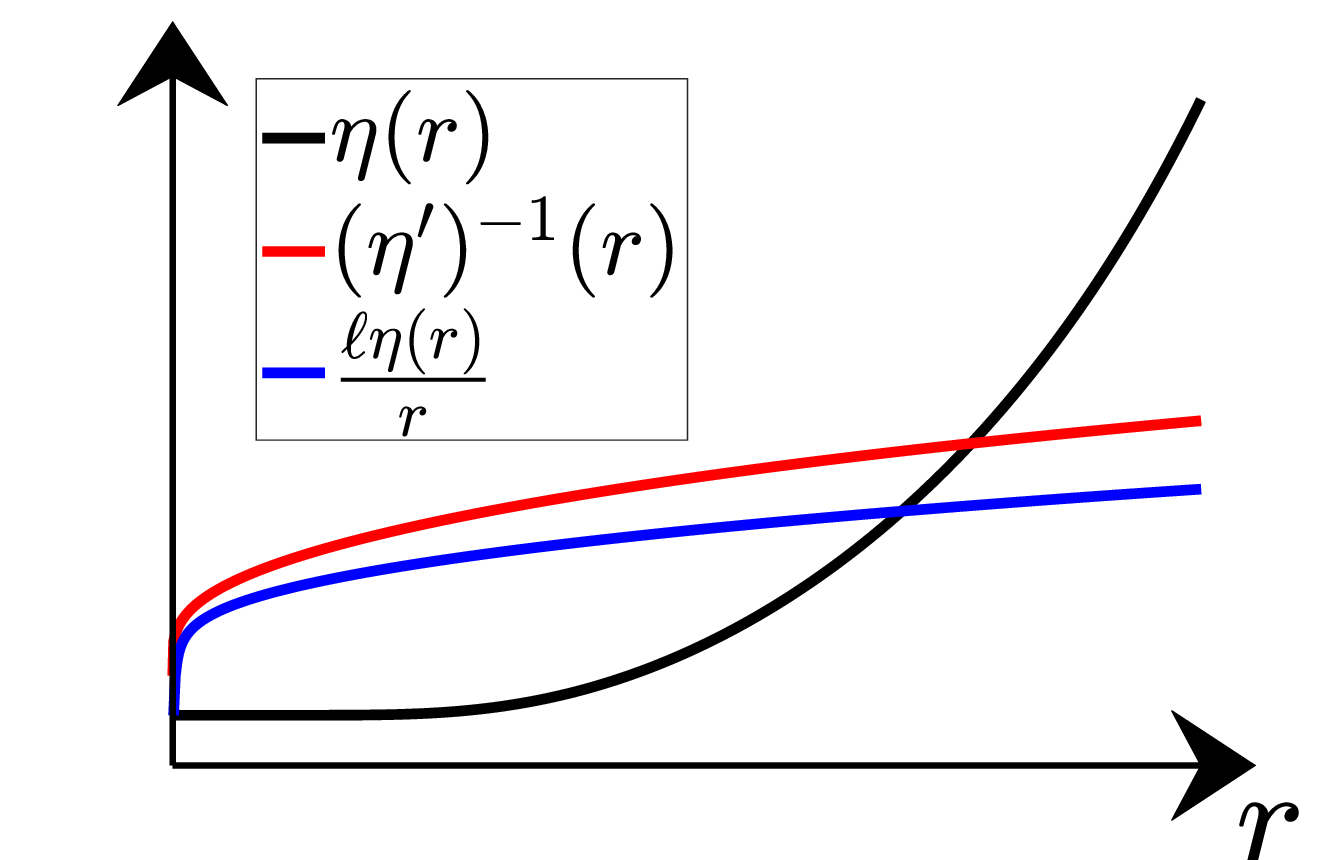}
\caption{$\eta(r) = e/(e^{1/r} - e)$ in (black), $(\eta^{\prime})^{-1}(r) = 1/(1+\ln(1+1/r))$ in (red) and $\frac{\ell \eta (r)}{r} = \frac{1}{r} \int \frac{{\rm d}r}{1+\ln(1+1/r)}$ in (blue).}
\label{fig:eta_4}
\end{subfigure}
\caption{Cost-on-control functions $\eta(r)$, the respective derivative inberse $(\eta')^{-1}(r)$ and the Legendre-Fenchel transform divided by its argument $\ell\eta(r)/r$.}
\label{fig:cost-on-control functions}
\end{figure*}

\begin{theorem} \label{thrm:IOC}Consider the system \eqref{eq:unicycle_polar} rewritten as
\begin{align}\label{eq:ioc_sys}
\dot{\Xi} = \bar{g}_1(\Xi)\frac{v}{\rho} + g_2(\Xi)\omega
\end{align}
where $\Xi \coloneqq [\rho,\delta,\gamma]^\top$ and
\begin{align}
\bar{g}_1 = \begin{bmatrix}
-\rho\cos\gamma\\
\sin\gamma\\
\sin\gamma
\end{bmatrix}, \quad
g_2 = \begin{bmatrix}
0\\
0\\
-1
\end{bmatrix}\,.
\end{align}
For any $\eta_i \in \mathcal{K}_\infty[0,a_i)$ such that also $\eta_i' \in \mathcal{K}_\infty[0,a_i)$ where $a_i > 0$ is finite or infinite,
and for any continuous positive scalar-valued functions $\varepsilon_1(\rho, \delta, \gamma)$ and $\varepsilon_2(\rho, \delta, \gamma)$, the cost functional
\begin{align}\label{eq:ioc_J_thrm}
J = \int_0^\infty &\Biggl[l(\rho,\delta,\gamma) + \eta_1\left(\frac{|v|}{\varepsilon_1\rho}\right) + \eta_2\left(\frac{|\omega|}{\varepsilon_2}\right)\Biggr] dt\,,
\end{align}
where $l(\rho,\delta,\gamma) = \ell\eta_1(\varepsilon_1|\nu_1|) + \ell\eta_2(\varepsilon_2|\nu_2|)$, is minimized by the feedback law
\begin{subequations}
\label{eq:ioc_u*}
\begin{align}
v^* &= -\rho\varepsilon_1 (\eta_1')^{-1}(\varepsilon_1|\nu_1|)\sgn(\nu_1)\\
\omega^* &= -\varepsilon_2 (\eta_2')^{-1}(\varepsilon_2|\nu_2|)\sgn(\nu_2)\,,
\end{align}
\end{subequations}
with $\nu_1(\Xi) \coloneqq L_{\bar{g}1}V$ and $\nu_2(\Xi) \coloneqq L_{g_2}V$ denoted for the CLF \eqref{eq:globa_CLF}, and  
for all initial conditions on $\{\rho>0\}\times\mathbb{R}^2$. Additionally, the feedback law
\begin{subequations}
\label{eq:ioc_u}
\begin{align}
v &= -\rho\varepsilon_1 \frac{\ell\eta_1(\varepsilon_1|\nu_1|)}{\varepsilon_1|\nu_1|}\sgn(\nu_1)\\
\omega &= -\varepsilon_2\frac{\ell\eta_2(\varepsilon_2|\nu_2|)}{\varepsilon_2|\nu_2|}\sgn(\nu_2)\,,
\end{align}
\end{subequations}
is continuous in $(\nu_1,\nu_2)$ and renders the origin $\rho=\delta = \gamma = 0$ of the system \eqref{eq:ioc_sys} globally asymptotically stable on $\{\rho>0\}\times\mathbb{R}^2$.
\end{theorem}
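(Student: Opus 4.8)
The plan is to treat both claims within the standard inverse-optimal Hamilton--Jacobi--Bellman (HJB) verification framework, exploiting crucially that \eqref{eq:ioc_sys} is driftless, so that $L_fV\equiv 0$ and the value-function candidate is simply the strict CLF $V$ of \eqref{eq:globa_CLF}. Along any trajectory one has $\dot V=\nu_1(v/\rho)+\nu_2\omega$, and the running cost $l$ has been reverse-engineered so that the steady-state HJB equation holds exactly at the proposed minimizer. First I would establish the pointwise lower bound on the Hamiltonian $H(\Xi,v,\omega):=l+\eta_1(|v|/(\varepsilon_1\rho))+\eta_2(|\omega|/\varepsilon_2)+\nu_1(v/\rho)+\nu_2\omega\ge 0$, with equality holding if and only if $(v,\omega)=(v^*,\omega^*)$.

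The key algebraic device for this bound is \emph{Young's inequality} in Fenchel form, which the Legendre--Fenchel transform of Definition~\ref{def:legendre_Fenchel} supplies: for the conjugate pair $(\eta_i,\ell\eta_i)$ one has $ab\le\eta_i(a)+\ell\eta_i(b)$ for all $a,b\ge 0$, with equality precisely when $b=\eta_i'(a)$, equivalently $a=(\eta_i')^{-1}(b)$. Writing $a=|v|/(\varepsilon_1\rho)$ and $b=\varepsilon_1|\nu_1|$, the forward-velocity contribution $\eta_1(|v|/(\varepsilon_1\rho))+\nu_1(v/\rho)$ is bounded below by $-\ell\eta_1(\varepsilon_1|\nu_1|)$, with equality iff $\sgn(v)=-\sgn(\nu_1)$ and $|v|/(\varepsilon_1\rho)=(\eta_1')^{-1}(\varepsilon_1|\nu_1|)$, i.e. iff $v=v^*$; the steering term is handled identically. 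Summing and invoking the definition $l=\ell\eta_1(\varepsilon_1|\nu_1|)+\ell\eta_2(\varepsilon_2|\nu_2|)$ gives $H\ge 0$ with the stated equality condition. I would then close the optimality argument by the telescoping identity $J=V(\Xi_0)-\lim_{t\to\infty}V(\Xi(t))+\int_0^\infty H\,dt$: since $H\ge 0$ and $V$ is positive definite, $J\ge V(\Xi_0)$ for every admissible (stabilizing) control, while $(v^*,\omega^*)$ makes $H\equiv 0$ and, as shown next, drives $V\to 0$, so it attains $J=V(\Xi_0)$ and is the minimizer.

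For the second claim I would substitute \eqref{eq:ioc_u} into $\dot V=\nu_1(v/\rho)+\nu_2\omega$ and observe the cancellation $\nu_i\cdot(\text{gain}_i)=-\ell\eta_i(\varepsilon_i|\nu_i|)$, giving $\dot V=-l=-\ell\eta_1(\varepsilon_1|\nu_1|)-\ell\eta_2(\varepsilon_2|\nu_2|)$. Because each $\ell\eta_i$ is class $\mathcal{K}_\infty$ and, by the CLF property recorded above, $(\nu_1,\nu_2)=0$ only at $\Xi=0$, the function $l$ is positive definite, so $\dot V<0$ for $\Xi\neq 0$; together with the positive definiteness and radial unboundedness of $V$ this yields global asymptotic stability on $\{\rho>0\}\times\mathbb{R}^2$. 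Continuity of \eqref{eq:ioc_u} reduces to the behavior of the gain $\ell\eta_i(r)/r$ as $r\to 0^+$; by the fundamental theorem of calculus this limit equals $(\eta_i')^{-1}(0)=0$, so each controller component tends to $0$ and is continuous across $\nu_i=0$, away from which continuity is automatic.

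The main obstacle I anticipate is not the pointwise Hamiltonian bound but the rigorous handling of the boundary term $\lim_{t\to\infty}V(\Xi(t))$ and the restriction to a suitable class of admissible controls. One must argue that controls failing to drive $V\to 0$ either leave the domain $\{\rho>0\}$ (which the desingularizing factor $v/\rho$, forcing $v\to 0$ as $\rho\to 0$, is designed to prevent) or accumulate infinite running cost through the positive-definite $l$, and hence cannot undercut $V(\Xi_0)$; care is also needed because $V$ is radially unbounded in the angles and in $\rho\to\infty$ while the target $\Xi=0$ sits on the boundary $\rho=0$ of the state space, so the usual Lyapunov conclusions must be stated relative to this open domain rather than all of $\mathbb{R}^3$.
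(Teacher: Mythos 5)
Your proof is correct and matches the approach the paper builds on: the paper itself omits all proofs, but its setup (the Legendre--Fenchel transform of Definition~\ref{def:legendre_Fenchel} and the pointer to Theorem~3.2 of \cite{krstic1998inverse}) indicates precisely your argument---Young's inequality $ab\le\eta_i(a)+\ell\eta_i(b)$ with equality at $a=(\eta_i')^{-1}(b)$ to bound the Hamiltonian, the telescoping identity $J=V(\Xi(0))-\lim_{t\to\infty}V(\Xi(t))+\int_0^\infty H\,dt$ over stabilizing controls, and the Lyapunov computation $\dot V=-l$ for the feedback \eqref{eq:ioc_u}. One small patch: asymptotic stability under the minimizer $(v^*,\omega^*)$ is not literally ``shown next'' in your text (that paragraph treats the different feedback \eqref{eq:ioc_u}); it follows instead from $H\equiv 0$ along the closed loop, which gives $\dot V=-l-\eta_1\bigl(|v^*|/(\varepsilon_1\rho)\bigr)-\eta_2\bigl(|\omega^*|/\varepsilon_2\bigr)\le -l$, negative definite because $|L_{\bar g}V|$ vanishes only at $\Xi=0$, so the same Lyapunov reasoning closes the attainment step of the optimality claim.
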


The design space of the stabilizer \eqref{eq:ioc_u} and its optimal counterpart \eqref{eq:ioc_u*} has infinitely many configurations. An exhaustive exploration is impossible, but it is possible to explore qualitatively the tradeoff between the control effort and the related cost on the state.


\section{Specific inverse optimal choices}


\begin{figure}[t]
\centering
\includegraphics[width=\linewidth]{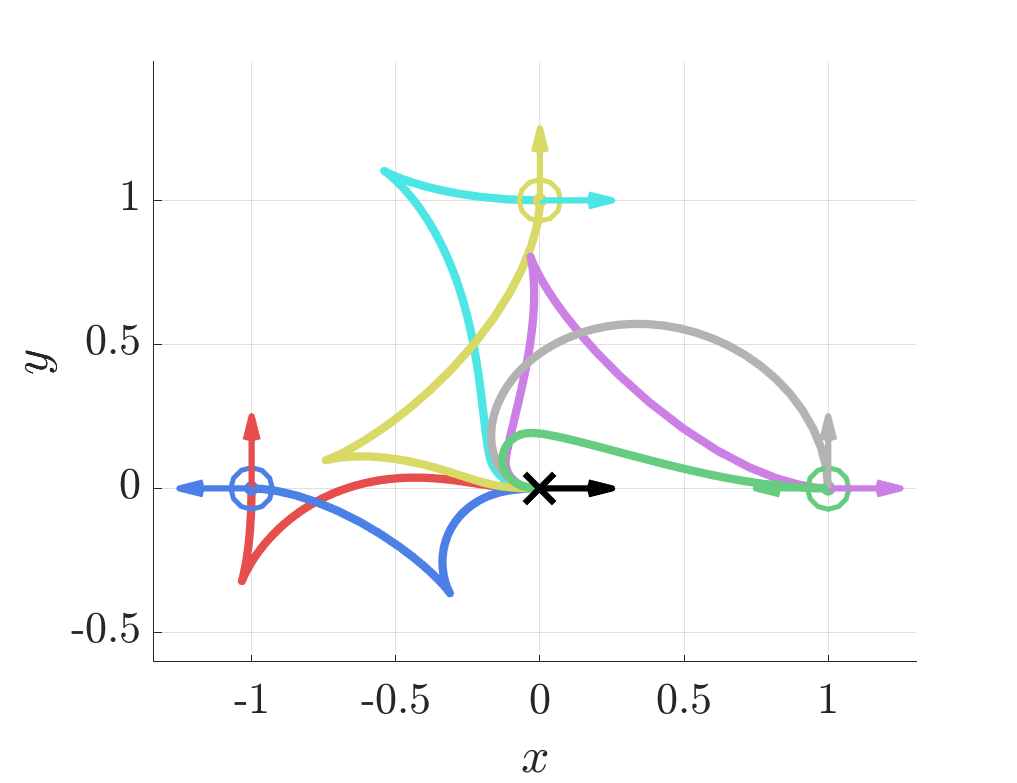}
\caption{Inverse optimal controller trajectory with a quadratic cost on control effort \eqref{eq:eta_quad} for several representative initial conditions (respective colors) to the target position and heading (black).}
\label{fig:ioc_trajectory}
\end{figure}

We present four examples based on different choices of the function $\eta$, each leading to a distinct inverse optimal controller. Example~\ref{example:IOC1} yields a controller linear in $L_{\bar{g}_1}V$ and $L_{g_2}V$, already introduced in Section \ref{sec-basic-quadratic}, Example~\ref{example:IOC2} produces a sublinear relation of control with  $L_{\bar{g}_1}V$ and $L_{g_2}V$, and Examples~\ref{example:IOC3} and \ref{example:IOC4} result in bounded control laws. While Theorem \ref{thrm:IOC} does not require uniformity in the choice of $\eta_1$ and $\eta_2$, the following examples use the same function for both to highlight each behavior, and  the subscript on $\eta(r)$ is omitted henceforth. The four choices of $\eta$, along with their corresponding derivative inverses and Legendre–Fenchel transformations, are summarized in Fig.~\ref{fig:cost-on-control functions}, illustrating how the associated control effort costs increase.


\begin{example}\label{example:IOC1} \textbf{(Linear in $L_{\bar{g}_1}V$ and $L_{g_2}V$)}
Consider the quadratic cost function defined by
\begin{align}\label{eq:eta_quad}
\eta(r) = \frac{r^2}{2}
\,,
\end{align} 
corresponding to the classical quadratic cost on both the state running cost and the control, as detailed in the basic quadratic inverse optimal controller \eqref{eq-u*-quad0}.

Simulation results for $\varepsilon_1 = \varepsilon_2 = 1$ (Fig.~\ref{fig:ioc_trajectory}) show that, for many initial conditions, the controller first reverses to realign before moving toward the target. For the case where the unicycle starts parallel and directly above the target (cyan), the motion resembles a reversed parallel parking maneuver, similar to how a human driver might approach the task.

However, quadratic costs on control and state may be naive for nonlinear systems in general, particularly for the unicycle. Although they penalize input over the time horizon, they may still yield occasional large values outside actuator limits. As shown in Fig.~\ref{fig:ioc_controleffort_ex1}, this results in forward velocity magnitudes up to $7$ units/s and steering rate magnitudes up to $8$ units/s---values not unreasonable but potentially beyond actuator range. In the next three examples, we present alternatives with stronger control penalties. Since the trajectories $y(x)$ remain nearly identical across cases, we focus instead on the differences in control effort.
\end{example}

\begin{example}\label{example:IOC2} \textbf{(Sublinear/logarithmic in $L_{\bar{g}_1}V$ and $L_{g_2}V$)}
Consider the hyperbolic cosine cost function defined as
\begin{align}\label{eq:eta_hyperbolic}
\eta(r) = \cosh(r) - 1
\,,
\end{align}
which imposes a larger penalty when the control effort is large as compared to the quadratic cost in Example~\ref{example:IOC1}. This is reflected in the minimizer feedback
\begin{subequations}\label{eq-u*-cosh}
\begin{align}
v^* &= -\rho\varepsilon_1\mathrm{arcsinh}\left(\varepsilon_1L_{\bar{g}_1}V\right)\\
\omega^* &= -\varepsilon_2\mathrm{arcsinh}\left(\varepsilon_2L_{g_2}V\right)\,,
\end{align}
\end{subequations}
where, compared to the linear optimal controller in \eqref{eq-u*-quad0}, the feedback law in \eqref{eq-u*-cosh} exhibits a sublinear dependence on $L_{\bar{g}_1}V$ and $L_{g_2}V$. This implies that the controller grows more slowly than linearly with respect to $L_{\bar{g}_1}V$ and $L_{g_2}V$. Thus, the feedback in \eqref{eq-u*-cosh} is interpreted as a less effort-intensive alternative to the linear controller in \eqref{eq-u*-quad0}. This is reflected in Fig~\ref{fig:ioc_controleffort_ex2}, with $\varepsilon_1 = \varepsilon_2 = 1$, showing a nearly threefold reduction in control effort.
\end{example}

\begin{figure}[t]
\centering
\begin{subfigure}{0.48\textwidth}
\centering
\includegraphics[width=\textwidth]{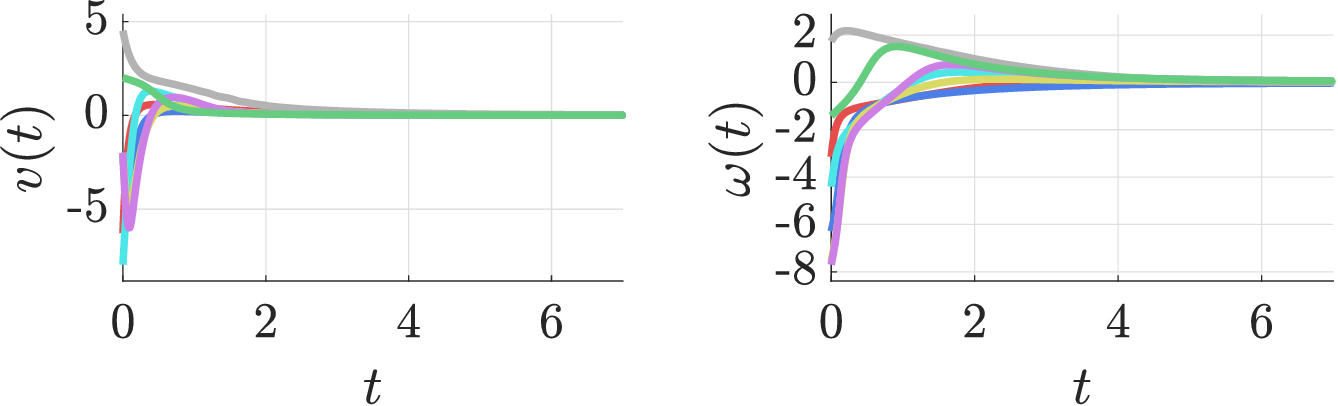}
\caption{Control effort of the linearly proportional to $L_{\bar{g}_1}V$ and $L_{g_2}V$ controller \eqref{eq-u*-quad0}.}
\label{fig:ioc_controleffort_ex1}
\end{subfigure}
\vspace{0.25cm}
\begin{subfigure}{0.48\textwidth}
\centering
\includegraphics[width=\textwidth]{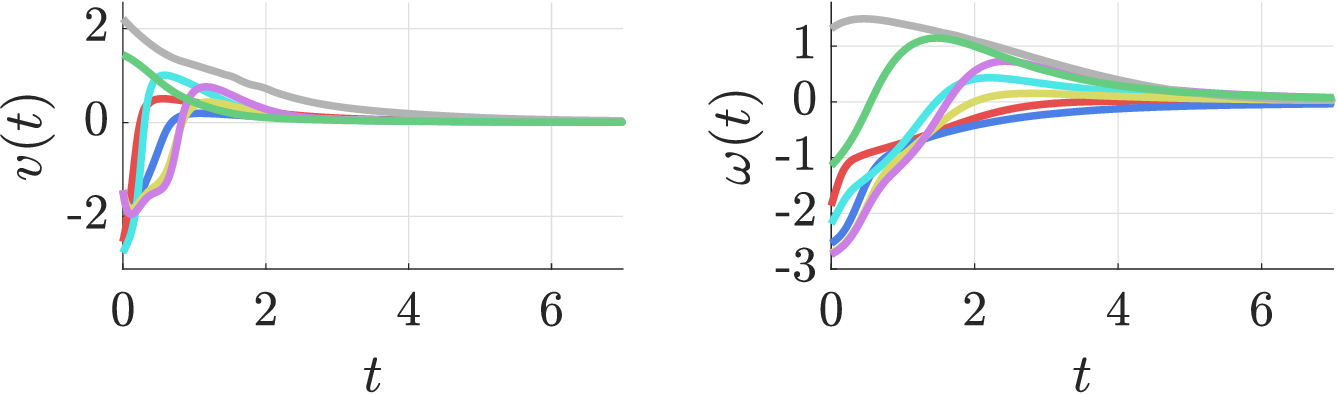}
\caption{Control effort of the sublinearly proportional to $L_{\bar{g}_1}V$ and $L_{g_2}V$ controller \eqref{eq-u*-cosh}.}
\label{fig:ioc_controleffort_ex2}
\end{subfigure}
\caption{Comparison in control effort between Example~\ref{example:IOC1} and Example~\ref{example:IOC2} with $\varepsilon_1 = \varepsilon_2 = 1$.}
\label{fig:ioc_ex1_vs_ex2}
\end{figure}

\begin{example}\label{example:IOC3}\textbf{(Bounded/saturating control)} While the hyperbolic cost on control effort significantly reduced the input magnitudes, one might still desire a stronger guarantee by directly enforcing a bound on the maximum allowable control input. 
To achieve bounded optimal control, we require that the inverse of $\eta'$ in the general expression for the optimal control be a bounded function. For instance, setting $(\eta')^{-1}(r) = \arctan(r)$ leads to
\begin{equation}\label{eq:eta_ln}
\eta(r) = - \ln(\cos(r))
\,,
\end{equation}
which is of class $\mathcal{K}_\infty$ on the interval $[0,\pi/2)$. 

This choice implies that the control cost becomes unbounded as $|v|$ approaches the limit $\rho\varepsilon_1\pi/2$ and $|\omega|$ approaches the limit $\varepsilon_2 \pi/2$, as seen in the optimal controller given by
\begin{subequations}\label{eq:u*_ln}
\begin{align}
v^* &= -\rho\varepsilon_1\arctan\left(\varepsilon_1L_{\bar{g}_1}V\right)\\
\omega^* &= -\varepsilon_2\arctan\left(\varepsilon_2L_{g_2}V\right)\,.
\end{align}
\end{subequations}
While the upper bound $|\omega| \leq \overline{\omega}$ is easily enforced by selecting $\varepsilon_2 = 2\overline{\omega}/\pi$, the upper bound $|v| \leq \overline{v}$ is more subtle. This is because the limiting value $\rho \varepsilon_1 \pi/2$ is dependent on $\rho$. To address this, we define
\begin{align}
\label{eq-eps1-rho}
\varepsilon_1(\rho) = \frac{\overline{v}}{\sigma + \rho} \frac{2}{\pi} > 0\,,
\end{align}
where $\sigma > 0$ is a small constant, to ensure the bound holds. Fig~\ref{fig:ioc_controleffort_ex3} illustrates the result for $\bar{v} = \bar{\omega} = 1$ and $\sigma = 0.1$, showing that neither control input exceeds magnitude $1$.
\end{example}

\begin{example}\label{example:IOC4}\textbf{(``Relay-approximating'' bounded control)}
Alternative to Example~\ref{example:IOC3}, consider the non-quadratic cost function defined by
\begin{align}\label{eq:eta_bangbang}
\eta(r) &= \frac{e}{e^{1/r} - e}
\,,
\end{align}
which represents the control penalty, is infinitely flat near the origin and blows up at $r = 1$. This means that small control magnitudes are penalized very lightly, while magnitudes approaching 1 incur an infinite cost, effectively enforcing a hard input constraint.


The resulting optimal feedback law is given as
\begin{subequations}\label{eq:u*_bangbang}
\begin{align}
v^* = -\rho\varepsilon_1\frac{\sgn(L_{\bar{g}_1}V)}{1+\ln\left(1+ \dfrac{1}{\varepsilon_1 |L_{\bar{g}_1}V|}\right)}\\
\omega^* = -\varepsilon_2\frac{\sgn(L_{g_2}V)}{1+\ln\left(1+ \dfrac{1}{\varepsilon_2 |L_{g_2}V|}\right)}\,.
\end{align}
\end{subequations}
However, for similar reasons as in Example~\ref{example:IOC3}, to guarantee the upper bound $|v| \leq \overline{v}$ and $|\omega| \leq \overline{\omega}$, we choose
\begin{align}
\varepsilon_1(\rho) &= \frac{\overline{v}}{\sigma + \rho} > 0\,,
\end{align}
for small $\sigma > 0$ and $\varepsilon_2 = \overline{\omega}$. Fig~\ref{fig:ioc_controleffort_ex4} shows the simulation results with $\bar{v} = \bar{\omega} = 1$ and $\sigma = 0.1$. Notably, unlike the concentrated, high initial effort observed in Example~\ref{example:IOC3}, the control law \eqref{eq:u*_bangbang} produces a more evenly sustained control effort over time. This behavior is due to the cost function \eqref{eq:eta_bangbang}, which assigns minimal cost to small inputs, thereby tolerating low-magnitude control efforts over the trajectory. The settling times on the controls in Fig~\ref{fig:ioc_controleffort_ex4} are different due to this more sustained control effort. One certainly cannot expect the achievement of fixed-time (FxT) stabilization under bounded control, but only finite-time (FT) stabilization. 
\end{example}

\begin{figure}[t]
\centering
\begin{subfigure}{0.48\textwidth}
\centering
\includegraphics[width=\textwidth]{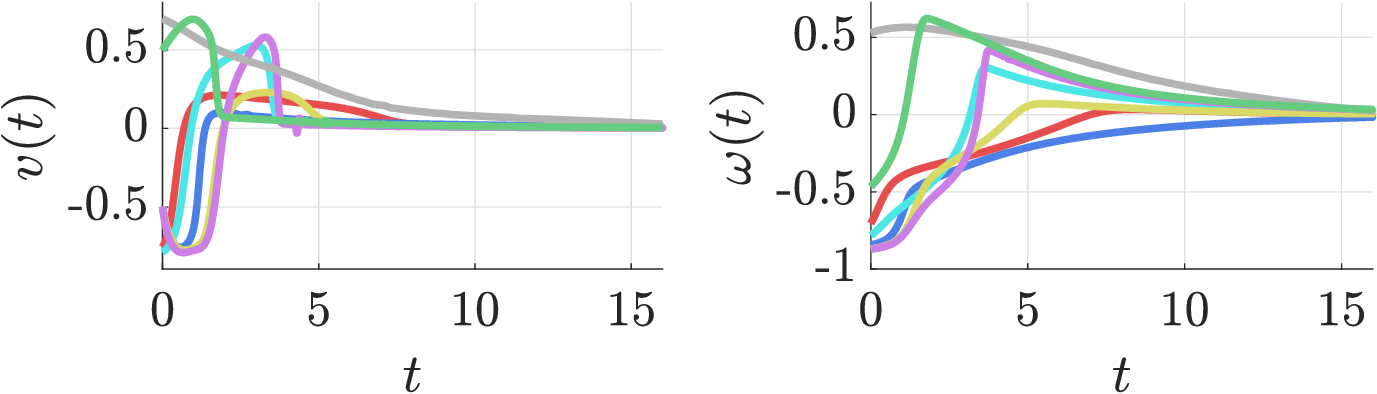}
\caption{Control effort of the bounded controller \eqref{eq:u*_ln}.}
\label{fig:ioc_controleffort_ex3}
\end{subfigure}
\vspace{0.25cm}
\begin{subfigure}{0.48\textwidth}
\centering
\includegraphics[width=\textwidth]{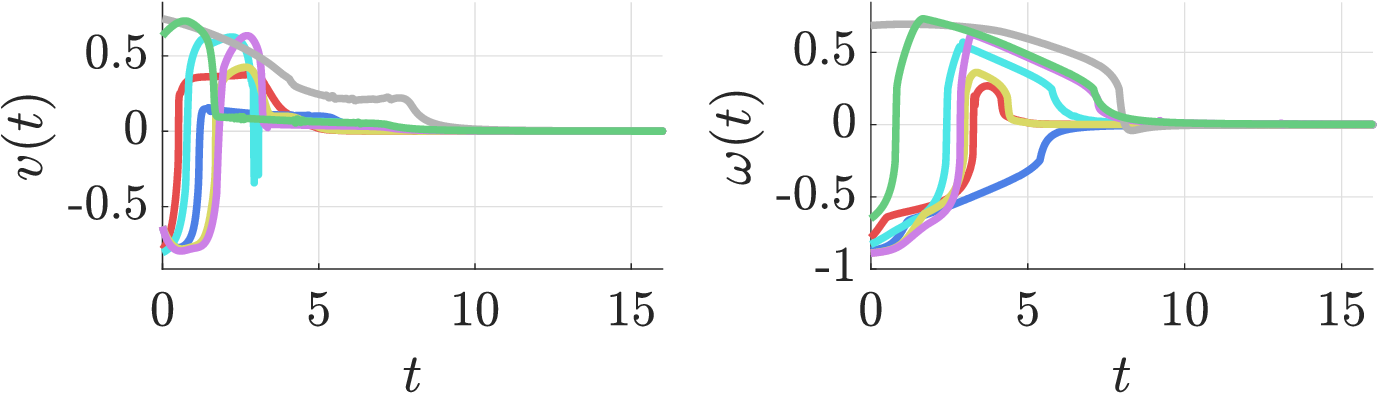}
\caption{Control effort of the bounded controller \eqref{eq:u*_bangbang}.}
\label{fig:ioc_controleffort_ex4}
\end{subfigure}
\captionsetup{belowskip=0pt}
\caption{Comparison in control effort between Example~\ref{example:IOC3} and Example~\ref{example:IOC4}  with $\bar v = \bar \omega = 1$.}
\label{fig:ioc_ex3_vs_ex4}
\end{figure}

\section{Gain margin and robustness to actuator saturation} 

The key practical benefit of the $L_gV$ inverse optimal control is the {\em gain margin}, i.e., the robustness to the uncertainty in the input coefficients. In the unicycle model, the coefficients on the inputs $(v,\omega)$ are taken, implicitly, as $(1,1)$. In the simplest of the inverse optimal controllers \eqref{eq-u*-quad0}, the gains $\varepsilon_1,\varepsilon_2>0$ are arbitrary, which means that the inverse optimal controller has a gain margin of $(0,\infty)$, greater than the conventional gain margin $[1/2,\infty)$ of optimal controllers for systems with drift. The unusually strong gain margin $(0,\infty)$, which allows the reduction of the optimal controller's gain for the unicycle to an {\em arbitrarily low} value is a consequence of the unicycle system being driftless. 

Inverse optimality doesn't afford only robustness to the uncertainty in the input coefficient. It also enables operation under arbitrarily low input saturation, as we have already observed in Examples~\ref{example:IOC3} and \ref{example:IOC4}. 
Despite the arbitrarily low input saturations, the feedback always acts in the direction determined by the sign of the $L_{\bar{g}_1}V$ and $L_{g_2}V$ terms (with full expressions given in~\eqref{eq-u*-quad0}). The inverse optimal controller always delivers the ``correct directions'' for control by design: they are made to do so by our construction of the CLF $V(\rho,\delta,\gamma)$. 

\section{Adaptive Stabilization under Unknown Input Coefficients}

Consider the model
\begin{subequations}
\label{eq:unicycle_polar_closed_loop-Gv-slip}
\begin{align}
\dot{\rho} &=  -b_1v \cos\gamma\\
\dot{\delta} &=  b_1\frac{v}{\rho}\sin(\gamma)
\\
\dot{\gamma} &= b_1\frac{v}{\rho}\sin(\gamma)
-b_2 \omega
\,,
\end{align}
\end{subequations}
where the constants $b_1,b_2$ are positive but otherwise completely unknown. For example, $b_1,b_2 \in (0,1]$ may physically represent {\em unknown} wheel slippage coefficients (on a two-wheeled mobile robot). 
The adaptive control laws
\begin{eqnarray}
\label{eq-v-adapt}
v &=& -\hat\varepsilon_1 \rho L_{\bar{g}_1}V\\
\label{eq-om-adapt}
\omega &=& -\hat\varepsilon_2 L_{g_2}V\,,
\end{eqnarray}
are introduced where $\hat \varepsilon_1, \hat \varepsilon_2$ are the online estimates of $1/b_1,1/b_2$, respectively. The derivative of the strict CLF \eqref{eq:globa_CLF} is given as
\begin{equation}
\dot V = -(L_{\bar g_1} V)^2 \left(1 - b_1 \tilde\varepsilon_1 \right) -(L_{ g_2} V)^2 \left(1 - b_2 \tilde\varepsilon_2 \right)\,,
\end{equation}
where $\tilde\varepsilon_i = 1/b_i - \hat\varepsilon_i$. 
Taking the adaptive CLF
\begin{equation}\label{eq:CLF_adapt}
V_{\rm a} = \ln(1+n(V)) + \frac{b_1}{2\mu_1}\tilde\varepsilon_1^2 + \frac{b_2}{2\mu_2}\tilde\varepsilon_2^2 \,,
\end{equation}
with adaptation gains $\mu_1,\mu_2>0$ and any normalization function $n\in\mathcal{K}_\infty\cap C^1$, including, for example, $n(V) = n_0V, n_0>0$, we get
\begin{align}
\dot V_{\rm a} =& -\frac{n'(V)}{1+n(V)}\left[(L_{\bar g_1} V)^2 +(L_{ g_2} V )^2 \right]
\nonumber\\
& +b_1\tilde\varepsilon_1\left[  \frac{n'(V)}{1+n(V)}(L_{\bar g_1} V)^2  - \frac{\dot{\hat\varepsilon}_1}{\mu_1}\right] \nonumber\\
& +b_2\tilde\varepsilon_2\left[  \frac{n'(V)}{1+n(V)}(L_{ g_2} V)^2  - \frac{\dot{\hat\varepsilon}_2}{\mu_2}\right]\,.
\end{align}
Hence, we pick the update laws
\begin{eqnarray}
\label{eq-eps1-adapt}
\dot{\hat\varepsilon}_1 & = & \mu_1 \frac{n'(V)}{1+n(V)}(L_{\bar g_1} V)^2\\
\label{eq-eps2-adapt}
\dot{\hat\varepsilon}_2 & = & \mu_2 \frac{n'(V)}{1+n(V)}(L_{ g_2} V)^2  \,,
\end{eqnarray}
and obtain
\begin{equation}\label{eq:adapt_dotV}
\dot V_{\rm a} =  -\frac{n'(V)}{1+n(V)}\left[(L_{\bar g_1} V)^2 +(L_{ g_2} V )^2 \right]  \,,
\end{equation}
which is negative for all $(\rho,\delta,\gamma)\neq (0,0,0)$ but is not negative definite in the overall state of the adaptive system, which also includes $(\tilde\varepsilon_1,\tilde\varepsilon_2)$. With LaSalle's theorem, we obtain the following result.

\begin{theorem} \label{thrm:adapt}
Consider the system \eqref{eq:unicycle_polar_closed_loop-Gv-slip} with arbitrary unknown $b_1,b_2>0$, along with the control law \eqref{eq-v-adapt}, \eqref{eq-om-adapt} and the update laws \eqref{eq-eps1-adapt}, \eqref{eq-eps2-adapt}. For the CLF \eqref{eq:globa_CLF}, with class $\mathcal{K}_\infty$ functions $\alpha_1,\alpha_2$ such that 
\begin{align}
\alpha_1\left(|\rho,\delta,\gamma|\right) \leq V(\rho,\delta,\gamma)
\leq \alpha_2\left(|\rho,\delta,\gamma|\right)\,,
\end{align}
for all initial conditions $(\rho(0),\delta(0),\gamma(0)) \in \{\rho > 0\}\times\mathbb{R}^2$ and for all parameter initial conditions $\hat \varepsilon_1(0)\in\mathbb{R},\hat \varepsilon_2(0)\in\mathbb{R}$, the following holds:
\begin{align}\label{eq:adapt_upsilon}
&\Upsilon(t) \leq a_1^{-1}\left( M\left(e^{m \, a_2(\Upsilon(0))} - 1\right)\right)\,,\quad \forall t \geq 0\,,
\end{align}
with $\Upsilon = 
|(\rho,\delta,\gamma,\tilde\varepsilon_1,\tilde\varepsilon_2)|$, where $a_1(r) = \min\{n\circ\alpha_1(r),r^2\}$ and $a_2(r) = \max\{n\circ\alpha_2(r),r^2\}$
are $\mathcal{K}_\infty$ functions and
\begin{align}
M =  
\max\left\{1,\dfrac{c_2}{c_1}\right\}
\,, & \quad
\label{eq:adapt_M}
m =  \max\left\{c_2,\frac{1}{c_2}\right\}
\,,
\\
\label{eq-adaptivec1c2}
c_1 =  \min\left\{\frac{b_1}{2\mu_1},\frac{b_2}{2\mu_2}\right\}
\,, &\quad
c_2 =  \max\left\{\frac{b_1}{2\mu_1},\frac{b_2}{2\mu_2}\right\}\,.
\end{align}
In addition, $\rho(t),\delta(t),\gamma(t)\rightarrow 0$ as $t\rightarrow\infty$. 
\end{theorem}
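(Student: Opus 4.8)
The plan is to use \eqref{eq:adapt_dotV} as the engine: convert the non-positivity of $\dot V_{\rm a}$ into the explicit estimate \eqref{eq:adapt_upsilon} by sandwiching $V_{\rm a}$ between class-$\mathcal{K}_\infty$ functions of $\Upsilon$, and then obtain attractivity of the $(\rho,\delta,\gamma)$-block through LaSalle's invariance principle. First, since $n\in\mathcal{K}_\infty\cap C^1$ makes $n'(V)/(1+n(V))>0$ and the bracket in \eqref{eq:adapt_dotV} is a sum of squares, $\dot V_{\rm a}(t)\le 0$, so $V_{\rm a}(t)\le V_{\rm a}(0)$ for all $t\ge 0$; as $V_{\rm a}$ is radially unbounded in $(\rho,\delta,\gamma,\tilde\varepsilon_1,\tilde\varepsilon_2)$, the full state stays bounded. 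Because the closed-loop $\dot\rho=b_1\hat\varepsilon_1\rho\,L_{\bar g_1}V\cos\gamma$ is linear in $\rho$ (and, after desingularization, the whole vector field is regular at $\rho=0$), the set $\{\rho>0\}$ is preserved for all finite $t$ and the analysis extends continuously to its closure.

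Second, the technical core, I would sandwich $V_{\rm a}$ between class-$\mathcal{K}_\infty$ functions of $\Upsilon$, targeting $a_1(\Upsilon)\le M\big(e^{mV_{\rm a}}-1\big)$ and $V_{\rm a}\le a_2(\Upsilon)$. For the upper bound I use $\ln(1+u)\le u$ together with $V\le\alpha_2(|\rho,\delta,\gamma|)$ and $S:=\frac{b_1}{2\mu_1}\tilde\varepsilon_1^2+\frac{b_2}{2\mu_2}\tilde\varepsilon_2^2\le c_2|\tilde\varepsilon|^2$, folding the state and parameter contributions into the $\max$ defining $a_2$ and into $m$. For the lower bound I pass to $e^{V_{\rm a}}=(1+n(V))e^{S}\ge 1+n(V)+S$ (via $e^{S}\ge 1+S$), use $n(V)\ge n\circ\alpha_1(|\rho,\delta,\gamma|)$ and $S\ge c_1|\tilde\varepsilon|^2$, and split on whether the state block or the parameter block dominates $\Upsilon$, so that the sum is bounded below by the $\min$ defining $a_1$, the ratio $c_2/c_1$ surfacing in $M$; rearranging the resulting logarithmic bound yields $a_1(\Upsilon)\le M(e^{mV_{\rm a}}-1)$.

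Third, I chain the pieces: evaluating the lower bound at time $t$, using $V_{\rm a}(t)\le V_{\rm a}(0)$ and the upper bound at $t=0$ gives $a_1(\Upsilon(t))\le M\big(e^{mV_{\rm a}(0)}-1\big)\le M\big(e^{m\,a_2(\Upsilon(0))}-1\big)$, and applying the monotone inverse $a_1^{-1}$ produces \eqref{eq:adapt_upsilon}; since the right-hand side is $\mathcal{K}_\infty$ in $\Upsilon(0)$, this simultaneously delivers Lagrange and Lyapunov stability. For attractivity I invoke LaSalle on the autonomous closed loop: boundedness from \eqref{eq:adapt_upsilon} makes trajectories precompact, $\{\dot V_{\rm a}=0\}=\{L_{\bar g_1}V=L_{g_2}V=0\}$ by positivity of $n'(V)/(1+n(V))$, and the established positive definiteness of $|L_{\bar g}V(\Xi)|$ identifies this set with $\{(\rho,\delta,\gamma)=0\}$, on which the update laws freeze $\tilde\varepsilon$; hence the largest invariant set forces $\rho,\delta,\gamma\to0$.

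The hard part will be the lower bound in the second step: reconciling the logarithmically normalized state term $\ln(1+n(V))$ with the quadratic parameter term $S$ into a single clean $\mathcal{K}_\infty$ lower bound on $V_{\rm a}$ in $\Upsilon$, and tracking how $c_1,c_2$ propagate into $m=\max\{c_2,1/c_2\}$ and $M=\max\{1,c_2/c_1\}$; the convexity inequality $e^{S}\ge1+S$ and the case split by which block dominates $\Upsilon$ are what make the $\min/\max$ structure of $a_1,a_2$ emerge. By contrast, the monotonicity of $V_{\rm a}$ is already furnished by \eqref{eq:adapt_dotV}, and the LaSalle step is routine once boundedness and the positive definiteness of $|L_{\bar g}V|$ are in hand.
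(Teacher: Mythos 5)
Your proposal is correct and follows essentially the same route as the paper: the paper's own argument (sketched in the text preceding the theorem, with details deferred to the journal version) likewise rests on the adaptive CLF \eqref{eq:CLF_adapt}, the update laws \eqref{eq-eps1-adapt}--\eqref{eq-eps2-adapt} producing the negative-semidefinite derivative \eqref{eq:adapt_dotV}, a class-$\mathcal{K}_\infty$ sandwich of $V_{\rm a}$ to extract the explicit bound \eqref{eq:adapt_upsilon}, and LaSalle's invariance principle combined with the positive definiteness of $|L_{\bar g}V(\Xi)|$ to conclude $\rho,\delta,\gamma\to 0$. The only unfinished piece is the constant bookkeeping behind \eqref{eq:adapt_M}--\eqref{eq-adaptivec1c2}, which you yourself flag as the hard part; your $e^{S}\ge 1+S$ and case-split plan does yield a bound of exactly this form, though the precise values of $M$ and $m$ it produces may differ harmlessly from those stated.
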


In Fig.~\ref{fig:adaptive_control_fig}, we compare the  quadratic cost $L_gV$ controller (red), given by $v = -\rho L_{\bar{g}_1}V$ and $\omega = -L_{g_2}V$, with the adaptive control law in \eqref{eq-eps1-adapt}--\eqref{eq-eps2-adapt}, which uses the normalization function $n(V) = V$, initial parameter estimates $\hat{\varepsilon}_1(0) = \hat{\varepsilon}_2(0) = 0$, and adaptation gains $\mu_1 = \mu_2 = 0.5$ (blue) and $\mu_1 = \mu_2 = 1$ (cyan). The system is initialized at $(\rho(0),\delta(0),\gamma(0)) = (1,-\pi/2,-\pi/2)$, under uncertainty in $b_1 = b_2 = 1$. For the quadratic cost $L_gV$ controller, this corresponds to assuming $b_1\varepsilon_1 = b_2\varepsilon_2 = 1$. The adaptive controller exhibits smaller peak values in both forward velocity and steering input (Fig.~\ref{fig:normalized_control_input}), even while achieving a faster decay of the state norm (Fig.~\ref{fig:adapt_state_norm}). This behavior is clearly due to $\hat{\varepsilon}_{1,2}$ starting small but quickly growing beyond $1$, thereby accelerating the decay rate to be faster than the $L_gV$ controller.

\begin{figure}[t]
\centering
\begin{subfigure}[b]{\linewidth}
\centering
\includegraphics[width=.85\linewidth]{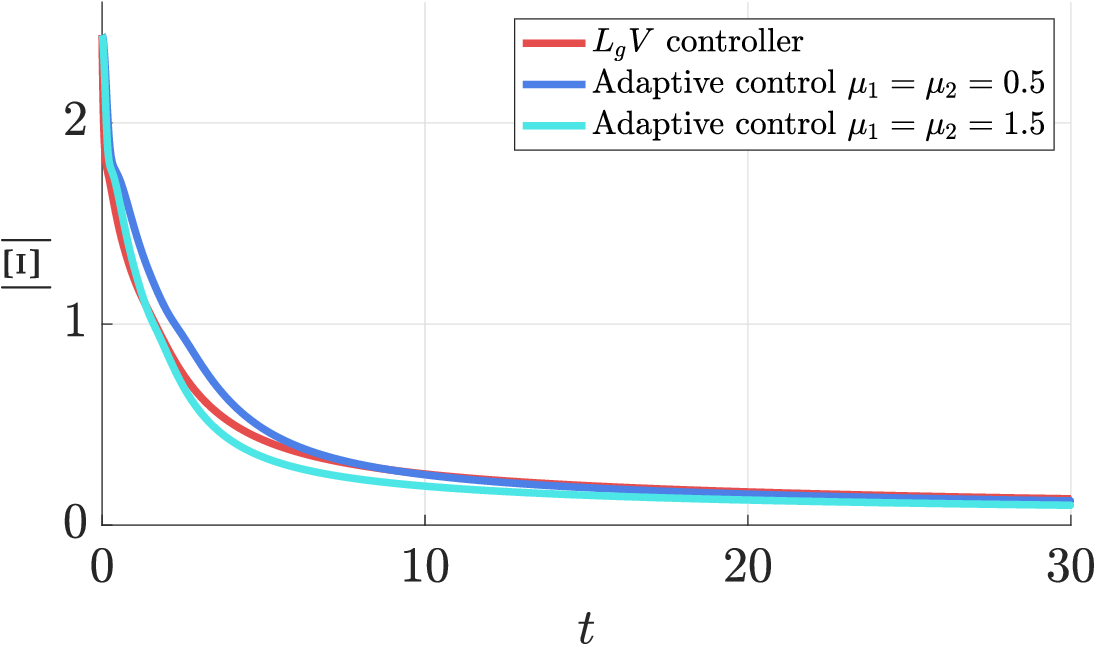}
\caption{Norm of $\Xi \coloneqq (\rho,\delta,\gamma)$ over time for each control law showing faster decay rate for both adaptive control laws.}
\label{fig:adapt_state_norm}
\end{subfigure}
\hfill
\vspace{0.1cm}
\begin{subfigure}[b]{\linewidth}
\centering
\includegraphics[width=.9\linewidth]{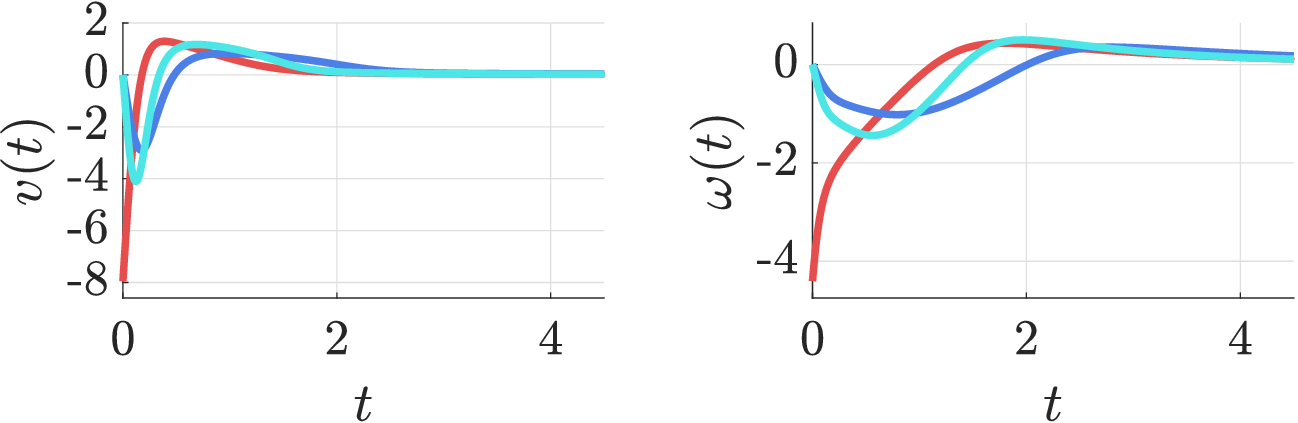}
\caption{Maximum magnitude of the control input is smaller for both adaptive control laws compared to the $L_gV$ controller despite the faster decay rate.}
\label{fig:normalized_control_input}
\end{subfigure}
\hfill
\vspace{0.1cm}
\begin{subfigure}[b]{\linewidth}
\centering
\includegraphics[width=.9\linewidth]{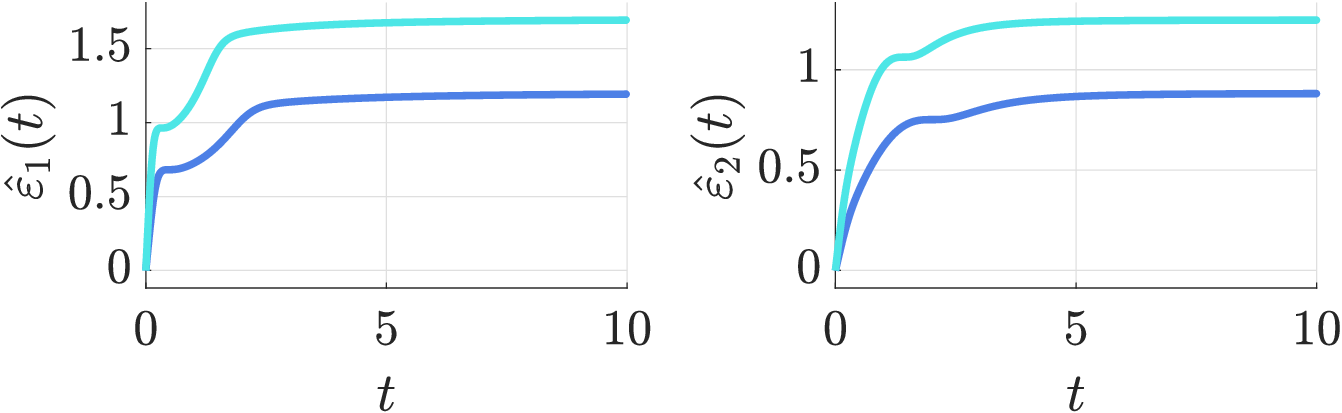}
\caption{$\hat{\varepsilon}_1$ and $\hat{\varepsilon}_2$ grows above $1$ leading to the faster decay rate.}
\label{fig:adaptive_eps}
\end{subfigure}
\caption{Comparison of state trajectories and control inputs under the quadratic cost $L_gV$ controller with both $b_1\varepsilon_1$ and $b_2\varepsilon_2$ taken as $1$ and the proposed adaptive controller with two different adaptation gains.}
\label{fig:adaptive_control_fig}
\end{figure}

The adaptive control law 
\eqref{eq-v-adapt}, \eqref{eq-om-adapt}, \eqref{eq-eps1-adapt}, \eqref{eq-eps2-adapt}
has similarities with the inverse optimal controller \eqref{eq-u*-quad0}. The difference is that the adaptive controller works even if it starts with gains of the wrong sign, $\hat\varepsilon_1(0)<0, \hat\varepsilon_2(0)<0$. Additionally, the adaptive controller's gain is improved, online, by learning from the transients reflected in $L_{\bar g}V$.

Of course, all update laws exhibit drift in the presence of disturbances. By adding leakage to the update laws \eqref{eq-eps1-adapt}, \eqref{eq-eps2-adapt}, as is standard, boundedness of the vehicle states and parameter estimates, as well as practical regulation of $\rho(t),\delta(t),\gamma(t)$ would be ensured. 


\section{Conclusion}

Leveraging globally strict CLFs for the unicycle parking problem, we establish an inverse optimal framework that yields controllers optimal with respect to meaningful costs without solving HJB equations. The proposed designs accommodate a wide range of penalties on control effort, including bounded inputs, while ensuring robustness through an infinite gain margin. Building on this property, we further develop an adaptive controller capable of addressing model uncertainty, with comparative results highlighting the effectiveness of both adaptive and non-adaptive implementations.

\bibliographystyle{IEEEtranS}
\bibliography{root}

\end{document}